\newcommand{\R}{\mathbb{R}}
\newcommand{\eps}{\varepsilon}
\theoremstyle{definition}
\newtheorem{definition}{Definition}[section] 
\theoremstyle{plain}
\newtheorem{theorem}[definition]{Theorem}
\newtheorem{lemma}[definition]{Lemma}
\theoremstyle{remark}
\newtheorem{remark}[definition]{Remark}
\newcommand{\diam}{\operatorname{diam}}
\def\Xint#1{\mathchoice
{\XXint\displaystyle\textstyle{#1}}%
{\XXint\textstyle\scriptstyle{#1}}%
{\XXint\scriptstyle\scriptscriptstyle{#1}}%
{\XXint\scriptscriptstyle\scriptscriptstyle{#1}}%
\!\int}
\def\XXint#1#2#3{{\setbox0=\hbox{$#1{#2#3}{\int}$ }
\vcenter{\hbox{$#2#3$ }}\kern-.57\wd0}}
\def\dashint{\Xint-}
\begin{document}

\title{\bf{Accessible parts of the boundary for domains in metric measure spaces}}
\date{}
\author{Ryan Gibara and Riikka Korte}
\maketitle

\begin{abstract}

In English:
We prove in the setting of $Q$--Ahlfors regular PI--spaces the following result: if a domain has uniformly large boundary when measured with respect to the $s$--dimensional Hausdorff content, then its visible boundary has large $t$--dimensional Hausdorff content for every $0<t<s\leq Q-1$. The visible boundary is the set of points that can be reached by a John curve from a fixed point $z_{0}\in \Omega$. This generalizes recent results by Koskela-Nandi-Nicolau (from $\mathbb R^2$) and Azzam ($\mathbb R^n$). In particular, our approach shows that the phenomenon is independent of the linear structure of the space.

In Finnish:
Title: Alueen näkyvä reuna metrisissä avaruuksissa
Abstract:
 Osoitamme Ahlfors-säännöllisissä metrisissä avaruuksissa seuraavan tuloksen: Jos alueen reuna on tasaisesti suuri $s$-uloitteisesen Hausdorffin mitan suhteen, tällöin sen näkyvä reuna on suuri $t$-uloitteisen Hausdorffin mitan suhteen kaikilla $0<t<s\leq Q-1$. Näkyvällä reunalla tarkoitamme niitä pisteitä, jotka voidaan saavuttaa John-poluilla jostain kiinnitetystä pisteestä. Tuloksemme yleistää  Koskelan, Nandin ja Nicolaun ($\mathbb R^2$) sekä Azzamin ($\mathbb R^n$) tuoreita tuloksia. Erityisesti konstruktiivinen menetelmämme osoittaa, että tämä ilmiö ei ole riippuvainen avaruuden lineaarisesta rakenteesta.

\end{abstract}

\bigskip
\noindent
{\small \emph{Key words and phrases}: visible boundary, metric measure space, John domain
}

\medskip
\noindent
{\small Mathematics Subject Classification (2020): Primary: 30L99, Secondary: 46E35, 26D15.}

\section{Introduction}

We say that a domain (that is, a connected open set) $\Omega\subset\R^n$ is $c$--John with centre $z_0\in\Omega$ and constant $c\geq 1$ if every $z\in\Omega$ can be joined to $z_0$ by a $c$--John path. That is, there exists 
a path $\gamma$ such that 
\[
\ell\big(\gamma(z',z) \big)\leq c\,d_{\Omega}(z')
\]
for all $z'$ in the image of $\gamma$, where $\ell\big(\gamma(z',z) \big)$ is the length of the subpath joining $z'$ to $z$ and $d_{\Omega}(z')=d(z',\Omega^{c})$. For such domains, every point $\omega\in\partial\Omega$ is accessible in a non-tangential sense: there exists a $c$--John path connecting the centre $z_0$ to $\omega$. 

Given a domain $\Omega$, not necessarily John, fix a point $z_0\in\Omega$ and a constant $c\geq{1}$. We consider the largest $c$--John subdomain of $\Omega$ with centre $z_0$, $\Omega_{z_0}(c)$, and call the set $\partial\Omega_{z_0}(c)\cap\partial\Omega$ the $c$--accessible (or $c$--visible) boundary of $\Omega$ near $z_0$. This corresponds to those points on $\partial\Omega$ that can be reached from $z_0$ by a $c$--John path. 

One can ask if the visible boundary of a domain is large in the sense that there exist $C>0$ and $0\leq t\leq n$ such that
\begin{equation}
\label{visibility}
\mathcal{H}^{t}_{\infty}( \partial\Omega_{z_0}(c) \cap \partial \Omega  ) \geq C d_{\Omega}(z_0)^{t}
\end{equation}
for some $c\geq{1}$ and all $z_0\in\Omega$. Here, the $t$--dimensional Hausdorff content of a subset $E\subset\R^n$ is defined as
$$
\mathcal{H}^{t}_{\infty}(E)=\inf\left\{\sum_{i=1}^{\infty}r_i^{t}:E\subset\bigcup_{i=1}^{\infty}B(x_i, r_i) \right\},
$$
where $B(x_i, r_i)$ denotes a ball (open or closed) centred at $x_i$ of radius $r_i$. The case $t=0$ is not interesting as condition \eqref{visibility} is trivially satisfied with $c=1$ for any proper subdomain of $\R^n$. On the other hand, condition \eqref{visibility} can fail for $t=n-1$ even when $\Omega$ is assumed to satisfy some nice geometric properties (see \cite{azzam}), and so the focus is on the interval $0<t<n-1$.

As pointed out by Koskela and Lehrb\"{a}ck in \cite{kl}, if a uniform domain $\Omega$ has uniformly large Hausdorff content in the sense that, for all $z\in \Omega$, 
$$
\mathcal{H}_{\infty}^{t}\big( B(z,2 d_{\Omega}(z)) \cap \partial\Omega \big) \geq C_0 d_{\Omega}(z)^{t} 
$$
for some $C_0>0$ and $0<t\leq n$, then it has large visible boundary; i.e. $\Omega$ satisfies \eqref{visibility} with the same $t$ and some $c\geq{1}$. A domain is called uniform if there is a constant $c_{u}\geq{1}$ such that all pairs $z_0,z_1\in \Omega$ can be joined by a path $\gamma$ such that $\ell(\gamma)\leq c_{u}|z_0-z_1|$ and $d_{\Omega}(z)\geq c_{u}^{-1}\min\{|z-z_0|,|z-z_1|\}$ for all $z$ in the image of $\gamma$.

A first result without assuming uniformity of the domain is proven by Koskela, Nandi, and Nicolau in \cite{knn} where they use techniques from complex analysis to show that any bounded simply connected domain in the complex plane satisfies $\eqref{visibility}$ for all $0<t<1$.

Shortly after, the result was generalized to $\mathbb R^n$. More precisely, Azzam proved in \cite{azzam} that if for some $0< s\leq n-1$ and $C_0>0$,
$$
\mathcal{H}_{\infty}^{s}\big( B(\omega,\lambda) \setminus \Omega \big) \geq C_0 \lambda^{s}
$$ 
holds for all $\omega\in\partial\Omega$ and $0<\lambda<\diam(\Omega)$, a condition he calls having lower $s$--content regular complement, then \eqref{visibility} holds for some $c\geq{1}$ and for $0 <t<s$. In fact, Azzam shows the stronger result with the visible boundary defined with chord-arc subdomains playing the role of John subdomains. The basis of his proof is the construction of a subset of the visible boundary that makes use of projections, and so it features strong reliance on the linear structure of the space.

The main result of the present paper is the following, a new proof showing that the phenomenon holds in the nonlinear setting. Our proof is based on a very flexible construction of a path family that is natural in more general metric measure spaces than $\mathbb{R}^n$. On the other hand, our estimates are based on iterative arguments, and so we do not obtain sharp dimensions if the measure is merely doubling, see Remark \ref{remark}.

\begin{theorem}\label{maintheorem}
Let $(X,d,\mu)$ be a complete $Q$--Ahlfors regular metric measure space supporting a weak $(1,p)$--Poincar\'{e} inequality, $1\leq p <\infty$. Fix $0<s\leq Q-1$. Let $\Omega\subset X$ be a domain such that for all $\omega\in\partial\Omega$ and all $0<\lambda<\diam(\Omega)$ we have
\begin{equation}\label{thickboundary}
\mathcal{H}_{\infty}^{s}\big( B(\omega,\lambda) \cap \partial\Omega \big)\geq {C_0} \lambda^{s} 
\end{equation}
for some $C_0>0$. Then for all $0<\eps<s$ there exist $c\geq 1$ and $C>0$ such that
\begin{equation}\label{bigvisible}
\mathcal{H}^{s-\eps}_{\infty}( \partial\Omega_{z_0}(c) \cap \partial \Omega  ) \geq C d_{\Omega}(z_0)^{s-\eps}
\end{equation}
for all $z_0\in\Omega$.
\end{theorem}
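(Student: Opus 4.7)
The strategy is to build a Cantor-like subset $E \subset \partial\Omega_{z_0}(c) \cap \partial\Omega$ by an iterative branching construction: the thickness hypothesis \eqref{thickboundary} supplies many candidate descendants at each scale, while the PI-space structure is used to discard those from which the $c$-John curve cannot be extended, at a quantitatively controlled loss that explains the $\eps$-gap. Fix $R := d_\Omega(z_0)$, choose $\omega_0 \in \partial\Omega$ with $d(z_0,\omega_0) \leq 2R$, and a geometric ratio $\lambda \in (0,1)$ to be tuned in terms of $\eps$; set $r_k := \lambda^k R$.

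I construct inductively a tree $\mathcal{T}$ of pointed pairs. A node at generation $k$ is $(x_k, p_k)$ with $x_k \in \partial\Omega$, $p_k \in \Omega$, $d(x_k, p_k) \approx r_k \approx d_\Omega(p_k)$, together with a $c$-John curve $\gamma_k$ from $z_0$ to $p_k$ of length $\lesssim R/(1-\lambda)$. To produce children of $(x_k,p_k)$, I use \eqref{thickboundary} at scale $r_{k+1}$ inside $B(x_k, 2r_k)$ to extract an essentially $r_{k+1}$-separated family $\{x_{k+1}^i\} \subset B(x_k, 2r_k) \cap \partial\Omega$ whose union of $r_{k+1}$-neighborhoods still carries $s$-content $\gtrsim r_k^s$. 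A candidate $x_{k+1}^i$ is \emph{admissible} if there is an interior point $p_{k+1}^i$ near $x_{k+1}^i$ with $d_\Omega(p_{k+1}^i) \approx r_{k+1}$ and a short curve from $p_k$ to $p_{k+1}^i$ whose concatenation with $\gamma_k$ still satisfies the John condition. When $k\to\infty$, each infinite admissible branch collapses to a point of $\partial\Omega$ reached by a $c$-John curve of length $\lesssim R$, producing the limit set $E \subset \partial\Omega_{z_0}(c)\cap\partial\Omega$.

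The heart of the argument is the quantitative claim that, within each parent ball, the $(s-\eps)$-content of \emph{obstructed} candidates is at most a small fraction of $\mathcal{H}_\infty^{s-\eps}(B(x_k, 2r_k) \cap \partial\Omega)$, with smallness achieved by increasing the John constant $c$. Here obstruction means that every short curve from $p_k$ toward the candidate is forced to meet $\partial\Omega$ prematurely. In a $Q$-Ahlfors regular PI-space this forces a thick condenser whose $Q$-modulus is quantitatively large (via the Loewner property), which via the standard Hausdorff-content/capacity comparisons valid in PI-spaces yields the desired upper bound on the obstructed content. Iterating and balancing branching (factor $\lambda^{-s}$) against the admissibility loss produces a Frostman-type mass distribution supported on $E$, so $\mathcal{H}_\infty^{s-\eps}(E) \gtrsim R^{s-\eps}$ as required. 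The main obstacle is precisely this modulus/capacity step: lacking Azzam's Euclidean projections, one must quantify obstruction intrinsically via the Loewner function, and extracting sharp enough constants to close the iteration while losing only $\eps$ in the exponent is the delicate point; the observation in Remark~\ref{remark} that merely doubling measures do not yield sharp dimensions is consistent with the PI assumption being used precisely here.
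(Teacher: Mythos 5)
Your plan correctly anticipates the overall architecture (iterative Cantor-type construction of a branching tree, a Frostman-type mass distribution on the limit set, a capacity-type estimate at the branching step, and a concatenation lemma for John curves), but the key quantitative step is misidentified, and this is a genuine gap rather than a detail.

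You propose to control the ``obstructed'' candidates via the $Q$-modulus/Loewner property. This cannot close the iteration. In the $Q$-regular setting the $Q$-capacity of a condenser between two sets of comparable size in a ball of radius $r$ scales like $r^{Q-Q}=\mathrm{const}$, so it is blind to the scale ratio $\eta$ and would give at most a bounded number of descendants per generation --- nowhere near the $\gtrsim \eta^{-(s-\eps)}$ branching factor needed for $\mathcal H^{s-\eps}_\infty(E)\gtrsim R^{s-\eps}$. The paper gets the right count precisely by working with the weak $(1,p)$--Poincar\'e inequality for $p$ \emph{close to} $Q-s$ (they take $p=Q-s+\eps/2$, or $p=1$ for $s=Q-1$), so that the Heinonen--Koskela content-to-capacity comparison (Lemma~\ref{lem:hk}, from Theorem 5.9 of \cite{hk98}) gives a $p$-energy lower bound $\gtrsim r^{Q-p}$; together with a test function whose local Lipschitz constant is supported only on the $N$ boundary-touching balls of radius $\eta r$, this forces $N\gtrsim \eta^{-(Q-p)}\approx\eta^{-(s-\eps/2)}$ (Lemma~\ref{lem:numberofballs}). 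The $\eps$-loss in the exponent is exactly the slack needed to pick such a $p$, which is why the theorem assumes Poincar\'e for sufficiently small $p$. Please also note that your ``upper bound on the obstructed content'' direction is not what the standard content-capacity estimates give (they give capacity \emph{lower} bounds from content lower bounds), and it is not clear the obstruction framing can be made to work. The paper avoids it entirely: at each scale it works with a \emph{maximal finitely chainable} family of balls inside $\Omega$ rooted at a safe interior ball of comparable radius, extracts from it a maximal well-placed subfamily along $\partial\Omega$, and uses that chain itself to produce the John curve (Lemmas~\ref{lem:2b}--\ref{lem:4John}); maximality, not an obstruction estimate, ensures the test function is $1$ on the boundary and $0$ on the interior core. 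So while your sketch is conceptually aligned, the step you yourself flag as the ``heart of the argument'' is carried by a different and sharper tool, and as written would not produce any nontrivial bound.
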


The study of the size of the visible boundary is partially motivated by its relationship to Hardy inequalities. It was shown in \cite{kl}, in the Euclidean setting, that if a domain has large visible boundary in the sense of \eqref{bigvisible}, then it admits certain Hardy inequalities. This was generalised to metric measure spaces in \cite{lehrback}, where the author also shows that it is enough for a domain to satisfy \eqref{thickboundary} to guarantee that it admits a Hardy inequality. The present work complements these results by showing, in fact, that \eqref{thickboundary} implies \eqref{bigvisible}.

We begin with some preliminaries, reviewing the definitions and notions relevant to the analysis on metric measure spaces. In Section 3, we construct a set $P_\infty$ that, in Section 4, is shown to be a subset of the visible boundary $\partial\Omega_{z_0}(c)\cap \partial \Omega$ for some $c\geq{1}$ and to satisfy $\mathcal{H}^{s-\eps}_{\infty}( P_\infty  ) \geq C d_{\Omega}(z_0)^{s-\eps}$, implying \eqref{bigvisible}. In Section 5, an example is given showing that it is necessary for \eqref{thickboundary} to hold at all $\omega$ and at all scales $\lambda$.

\section{Preliminaries}

Let $(X,d,\mu)$ be a metric measure space. By this we mean that $(X,d)$ is a metric space endowed with a non-trivial Borel regular (outer) measure $\mu$ such that $0<\mu(B)<\infty$ for all balls $B\subset{X}$ of positive and finite radius. 
The notations $B(r)$ and $B(x,r)$ will be used when the radius, or the centre and radius, respectively, must be specified.

Throughout the rest of the paper, we assume that $(X,d,\mu)$ is a complete $Q$--Ahlfors regular metric measure space supporting a weak $(1,p)$--Poincar\'{e} inequality for sufficiently small exponent $p$. It turns out that, without loss of generality, it is enough to prove the theorem under some additional assumptions -- see Remark \ref{geodesic}. We review these definitions below for the reader's benefit. A class of metric measure spaces satisfying all of these assumptions is the Carnot-Carath\'{e}odory spaces \cite{amp}, an important example of which is the Heisenberg group.

\begin{definition}
We say that the measure $\mu$ is $Q$--Ahlfors regular, $1<Q<\infty$, if there exists a constant $c_A>0$ such that for all balls $B(r)\subset{X}$, 
\[
\frac{1}{c_{A}}r^{Q}\leq\mu\big(B(r)\big)\leq c_{A}\,r^{Q}.
\]
\end{definition}
A measure being $Q$--Ahlfors regular implies that it is doubling; i.e.
\[
\mu\big(B(2r)\big)\leq c_d\mu\big(B(r)\big)
\]
with $c_{d}=2^Qc_{A}^2$. The optimal doubling constant may be smaller, however. Note that a complete and doubling metric measure space is proper: bounded and closed sets are compact.

Let $f$ be a locally Lipschitz function. Then its local Lipschitz constant is defined as
$$
\mathrm{Lip}f(x)=\liminf_{r\rightarrow{0}}\sup_{y\in B(x,r)}\frac{|f(x)-f(y)|}{d(x,y)}.
$$

\begin{definition}
We say that $X$ supports a weak $(1,p)$--Poincar\'{e} inequality, $1\leq p <\infty$, if there exist constants $c_P>0$ and $\tau\geq 1$ such that for all balls $B(r)\subset{X}$ and all locally Lipschitz functions $f$, we have 
$$
\dashint_{B(r)}\!|f-f_{B(r)}|\,\mathrm{d}\mu\leq c_Pr\left(\dashint_{B(\tau r)}\!(\mathrm{Lip}f)^p\,{d}\mu \right)^{1/p}.
$$
Here, and elsewhere, for any set $E\subset{X}$ with $0<\mu(E)<\infty$, we write
$$
f_{E}=\dashint_{E}\!{f}\,{d}\mu=\frac{1}{\mu(E)}\int_{E}\!{f}\,{d}\mu.
$$
\end{definition}

Throughout the paper, we will often be measuring the size of sets in terms of its Hausdorff content.

\begin{definition} 
Let $E\subset{X}$. The $\alpha$--dimensional Hausdorff content of $E$ is defined as 
$$
\mathcal{H}^{\alpha}_{\infty}(E)=\inf\left\{\sum_{i=1}^{\infty}r_{i}^{\alpha}\,:\,E\subset\bigcup_{i=1}^{\infty}B(x_i, r_i) \right\}.
$$
\end{definition}

Next we define the concepts mentioned in the introduction within the setting of metric measure spaces. For a set $E\subset{X}$ we write $d_E(z)=d(z,X\setminus E)$ for $z\in E$. By a path we mean a rectifiable, nonconstant compact curve $\gamma$ on $X$. Such curves can always be parametrized by arclength (see \cite{buse}); as such, all paths will be assumed to have domain $[0,\ell(\gamma)]$, where $\ell(\gamma)$ is the arclength of $\gamma$.  
We denote by $i(\gamma)$ the image of $\gamma$ in $X$ and by $\gamma(x,y)$ the subpath of $\gamma$ joining $x,y\in i(\gamma)$. For a set $E\subset X$ and a path $\gamma$ in $X$, we write
$$
d_E(\gamma)=\inf_{z\in i(\gamma)}d_E(z). 
$$

\begin{definition}
A path $\gamma:[0,\ell]\rightarrow{\Omega}$ with $x=\gamma(0)$ and $y=\gamma(\ell(\gamma))$ is called $c$--John if there exists a constant $c\geq{1}$ such that 
\[\ell\big(\gamma(z,y) \big)\leq c\,d_{\Omega}(z)
\] 
for all $z\in i(\gamma)$. A domain $\Omega\subset X$ is called $c$--John with centre $z_0\in\Omega$ if for every $z\in\Omega$ there exists a $c$--John path $\gamma:[0,\ell(\gamma)]\rightarrow{\Omega}$ with $z_0=\gamma(0)$ and $z=\gamma(\ell(\gamma))$. 
\end{definition}

If $\Omega\subset X$ is a $c$--John domain, then for every $\omega\in\partial\Omega$ (or, equivalently, for every $\omega \in \overline\Omega$) there exists a path $\gamma:[0,s]\rightarrow{\Omega\cup\{\omega\}}$ with $z_0=\gamma(0)$ and $\omega=\gamma(s)$ such that $\ell\big(\gamma(z,\omega) \big)\leq cd_{\Omega}(z)$ for all $z\in i(\gamma)$.

Note that a $c$--John domain $\Omega$ with centre $z_0$ satisfies $\Omega\subset B\big(z_0,cd_{\Omega}(z_0)\big)$. In particular, it is bounded.

\begin{definition}
Given a domain $\Omega\subset X$, a point $z_0\in\Omega$, and a constant $c\geq{1}$, write
$$
\Omega_{z_0}(c)=\bigcup\{U\subset\Omega: U\,\text{is a $c$--John domain with centre $z_0$} \}.
$$
The set $\partial\Omega_{z_0}(c)\cap\partial\Omega$ is called the visible, or accessible, boundary of $\Omega$ near $z_0$.
\end{definition}

\begin{remark}\label{geodesic}
Recall that our metric measure space is always assumed to be complete and Ahflors regular, and to support a weak Poincar\'{e} inequality. In this setting, we may assume without loss of generality that the metric is geodesic. Specifically, $d$ is quasiconvex, thus bi-Lipschitz equivalent to a geodesic metric $d'$ \cite[Theorem 8.3.2]{hkst}. The space retains all of its properties under this change of metric, but with modified constants. In fact, under this geodesic metric, we may assume that $\tau=1$ in the Poincar\'{e} inequality \cite[Theorem 4.18]{hei}. Additionally, a path is John with respect to $d$ if and only if it is John with respect to $d'$, but again with a potentially different constant. Therefore, we will assume throughout the paper that we have selected the geodesic metric and that $\tau=1$. 
\end{remark}

A consequence of selecting the geodesic metric is that balls are John domains, from which it follows that $\Omega_{z_0}(c)\neq\emptyset$ by the openness of $\Omega$. Then, it follows from the definition of visible boundary that $\Omega_{z_0}(c)$ is a John domain inside $\Omega$.

Some monographs on metric measure spaces where the reader may learn about these and further topics in the field are \cite{bb}, \cite{hei}, and \cite{hkst}.

\section{The Construction}
\label{section:construction}
In this section, we provide an iterative construction of a subset $P_{\infty}$ of the boundary of $\Omega$. In Section \ref{sec:proof}, we will prove that $P_{\infty}$ is part of the visible boundary and give a lower bound for its Hausdorff content.

To make the description of the construction simpler, we introduce some more definitions.

\begin{definition}
A collection of balls $\{B_i=B(x_{i},r) \}_{i\in I}$ for some common radius $r>0$ is said to be finitely chainable if for all $i,j\in I$ there exists a finite subcollection of balls $\{B_{i_1},B_{i_2},\ldots,B_{i_m} \}$ such that $i_1=i$, $i_m=j$, and $x_{i_{k+1}}\in B_{i_k}$ for all $k=1,2,\ldots,m-1$.
\end{definition}

In the construction of the visible boundary, we need to find plenty of balls that touch the boundary of $\Omega$ at all scales. In addition, the balls need to be far enough from each other so that the ``descendants'' of the balls will not touch the balls in different branches. These conditions will be fulfilled by balls that satisfy the next definition.

\begin{definition}
A collection of balls $\{B_i \}_{i\in I}$ 
 is said to be well placed along some set $F$ 
 if $\partial{B_i}\cap F\neq\emptyset$ for all $i\in I$ and $4B_{i}\cap 4B_{j}=\emptyset$ for every $i,j\in{I}$, $i\neq{j}$.
\end{definition}

Now we commence the construction. Fix $z_0\in\Omega$ and $0<\eta<1$, and set $r=r_{0}=d_{\Omega}(z_0)$. The value of $\eta$ will be specified later.

\underline{\bf Step 0}: Consider the ball $B(z_0,r)$ and choose a point $\omega_0\in\partial B(z_0,r)\cap\partial\Omega$. Write $P_0=\{\omega_0\}$.

\underline{\bf Step 1}: Consider the ball $B_1=B(\omega_0,2r)$. Denote by $\tilde{\mathcal{B}}_1$ a maximal finitely chainable collection of balls in $B_1\cap\Omega$ of radius $\eta r$ such that $B(z_0,\eta r)\in\tilde{\mathcal{B}}_1$. 
Then, consider $\mathcal{B}_1$, a maximal subcollection of $\tilde{\mathcal{B}}_1$ that is well placed along $\partial\Omega$.
For each ball $B\in \mathcal{B}_1$, consider a point $\omega\in\partial{B}\cap\partial{\Omega}$ and write $P_1$ for the collection of all such points.

\underline{\bf Step $k+1$}: Fix an $\omega\in P_{k}$ (that is $\omega\in \partial B(z,\eta^{k} r)\cap\partial\Omega$ for some $z\in\Omega$) and consider the ball $B_{k+1}=B(\omega,2\eta^{k}r)$. Denote by $\tilde{\mathcal{B}}_{k+1}(\omega)$ a maximal finitely chainable collection of balls in $B_{k+1}\cap\Omega$ of radius $\eta^{k+1}r$ such that $B(z,\eta^{k+1}r)\in\tilde{\mathcal{B}}_{k+1}(\omega)$.
Then, consider $\mathcal{B}_{k+1}(\omega)$, a maximal subcollection of $\tilde{\mathcal{B}}_{k+1}(\omega)$ that is well placed along $\partial\Omega$. Write 
$$
\mathcal{B}_{k+1}=\bigcup_{\omega\in P_k}\mathcal{B}_{k+1}(\omega).
$$
For each ball $B\in \mathcal{B}_{k+1}$, consider a point $\omega\in\partial{B}\cap\partial{\Omega}$ and write $P_{k+1}$ for the collection of all such points.

The set $P_\infty\subset B(\omega_0,2r)\cap\partial{\Omega}$ is then defined as $P_\infty=\overline{\bigcup_{k\geq{1}}P_k}$. 

\begin{remark}\label{remarknested}
Due to the assumed geodecity of the metric (see Remark \ref{geodesic}), this construction can be done in such a way that each each point $w\in P_k$ is also in $P_j$ for all $j>k$. Indeed, if $\omega \in \partial B \cap \Omega$ with $B\in \mathcal B_k$, then $B(\tilde\omega,\eta^{k+1}r)\in \tilde{\mathcal B}_{k+1}$, where $\tilde \omega$ is the point along the geodesic connecting $\omega$ to the centre of $B$ with distance $\eta^{k+1}r$ from the boundary of $B$. Geodecity of the space guarantees that $B(\tilde\omega,\eta^{k+1}r)\subset B$ and that $\omega$ is on the boundary of $B(\tilde\omega,\eta^{k+1}r)$. If the space were not geodesic, there might not exist a ball of radius $\eta^{k+1}r$ with $\omega$ on its boundary that is contained in $\Omega$. This will be useful in later lemmas.
\end{remark}

\section{Proof of Result}\label{sec:proof}

To complete the proof of Theorem~\ref{maintheorem}, we show that the set $P_{\infty}$ that was constructed in the previous section is part of the visible boundary and then we give an estimate for the Hausdorff content of $P_{\infty}$.
We start with two simple technical lemmas, Lemma \ref{lem:2b} and Lemma \ref{lem:4John}, that are needed for proving that $P_{\infty}$ is part of the visible boundary.

\begin{lemma}\label{lem:2b}
Let $\{B(x_i,r) \}_{i=1}^S $ be a finitely chainable collection of balls of some fixed radius. If $x$ and $y$ are centres of balls from $\{B(x_i,r) \}_{i=1}^S $, then there exists a path $\gamma_{x,y}$ such that 
\[
d_{{E}}(\gamma_{x,y})\geq\frac{r}{2}\qquad \text{and} \qquad \ell(\gamma_{x,y})\leq S r,
\]
where $E=\cup_{i=1}^{S} B(x_{i},r)$.
\end{lemma}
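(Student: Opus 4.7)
The plan is to build $\gamma_{x,y}$ directly out of the chain that finite chainability provides, exploiting the geodesic assumption from Remark \ref{geodesic}. By hypothesis there is a finite sequence $B_{i_1},\dots,B_{i_m}$ with $x_{i_1}=x$, $x_{i_m}=y$, and $x_{i_{k+1}}\in B_{i_k}$ for every $k$. By removing loops (if $B_{i_a}=B_{i_b}$ with $a<b$, delete the intermediate balls) we may assume the balls in the chain are pairwise distinct, so $m\leq S$. Since the underlying metric is geodesic, for each $k=1,\dots,m-1$ I would take a unit-speed geodesic $\sigma_k$ from $x_{i_k}$ to $x_{i_{k+1}}$ and define $\gamma_{x,y}$ to be their concatenation $\sigma_1*\sigma_2*\cdots*\sigma_{m-1}$.

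For the length bound, each $\sigma_k$ has length $d(x_{i_k},x_{i_{k+1}})<r$ because $x_{i_{k+1}}\in B(x_{i_k},r)$. Summing yields $\ell(\gamma_{x,y})<(m-1)r\leq Sr$, which is the required bound.

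The key observation for the distance estimate is that any point on a geodesic of length strictly less than $r$ lies within $r/2$ of one of its endpoints. Concretely, fix $z\in i(\gamma_{x,y})$; then $z$ lies on some $\sigma_k$, and because $\sigma_k$ is a geodesic we have $d(z,x_{i_k})+d(z,x_{i_{k+1}})=d(x_{i_k},x_{i_{k+1}})<r$, so $\min\{d(z,x_{i_k}),d(z,x_{i_{k+1}})\}<r/2$. Assume without loss of generality that $d(z,x_{i_k})\leq r/2$. For any $v\in X\setminus E$ we have $v\notin B(x_{i_k},r)$, whence $d(v,x_{i_k})\geq r$; the triangle inequality then gives
\[
d(v,z)\geq d(v,x_{i_k})-d(z,x_{i_k})\geq r-\tfrac{r}{2}=\tfrac{r}{2}.
\]
Taking the infimum over $v\in X\setminus E$ yields $d_E(z)\geq r/2$, and then the infimum over $z\in i(\gamma_{x,y})$ gives $d_E(\gamma_{x,y})\geq r/2$.

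There is no serious obstacle here; the only thing that must be used is the geodesic assumption, since on a non-geodesic space a ``chain'' of overlapping balls need not admit a short curve staying uniformly deep inside their union. The triangle-inequality step is what converts the qualitative chain into the quantitative lower bound $r/2$, and the factor of $2$ in the definition of ``well placed'' in the construction of Section \ref{section:construction} is exactly tuned so that the $r/2$ corridor produced here cannot meet the corridor produced by a sibling branch.
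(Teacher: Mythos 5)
Your proof is correct and follows essentially the same route as the paper's: decompose the chain into geodesic segments between consecutive centres, bound the total length by the number of balls times $r$, and observe that any point on a geodesic of length less than $r$ lies within $r/2$ of an endpoint, hence at distance at least $r/2$ from the complement of $E$. The only cosmetic difference is that you spell out the loop-removal step to get $m\leq S$, which the paper states implicitly as $k<S$.
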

\begin{proof} As the collection is finitely chainable, there exists a sequence of points $x_{i_{0}}=x,x_{i_{1}},\ldots, x_{i_{k}}=y$ with $k<S$ such that $x_{i_{j}}\in B(x_{i_{j-1}},r)$ for each $j=1,\ldots, k$. Let $\gamma_{j}$ be a geodesic connecting $x_{i_j}$ to $x_{i_{j+1}}$. Then $\ell(\gamma_{j})\leq r$ and $i(\gamma_{j})\subset B(x_{i_{j}},r/2)\cup B(x_{i_{j+1}},r/2)$. Consequently, 
\[
d_{E}(\gamma_{j})\geq d_{B(x_{i_{j}},r)\cup B(x_{i_{j+1}},r) }(\gamma_{j})\geq r/2.
\]
Thus $\gamma=\gamma_{1}\cup\cdots\cup \gamma_{k}$ is a path that satisfies the required conditions.
\end{proof}

\begin{lemma}
\label{lem:4John}
Let $M>{1}$ and $0<\eta<1$. Let $\{\gamma_k\}$ be a sequence, finite or infinite, of paths with images in some ball $B\subset\Omega$ of radius $r>0$ such that, for each $k$, $\gamma_{k}(\ell(\gamma_{k}))=\gamma_{k+1}(0)$, $\ell(\gamma_k)\leq M\eta^k r$ and $d_{\Omega}(\gamma_k)>\frac{1}{M}\eta^k r$. Then $\gamma=\gamma_1\cup\gamma_2\cup\cdots$ is a $c$--John path with $c=c(\eta,M)$.
\end{lemma}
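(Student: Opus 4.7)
The plan is to take $y$ to be the terminal endpoint of the concatenated path $\gamma=\gamma_1\cup\gamma_2\cup\cdots$---either $\gamma_N(\ell(\gamma_N))$ when the sequence is finite of length $N$, or the limit of the terminal points $\gamma_k(\ell(\gamma_k))$ when the sequence is infinite---and then verify the John condition $\ell(\gamma(z,y))\leq c\,d_\Omega(z)$ for every $z\in i(\gamma)$, with $c$ depending only on $\eta$ and $M$.

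First I would check that $\gamma$ is well defined as a rectifiable path. Since $\ell(\gamma_k)\leq M\eta^k r$ and $0<\eta<1$, the total length
\[
\sum_{k}\ell(\gamma_k)\leq \sum_{k\geq 1} M\eta^k r \leq \frac{Mr}{1-\eta}
\]
is finite. The partial terminal points therefore form a Cauchy sequence in $X$, which converges by completeness, yielding a well-defined endpoint $y\in\overline{\Omega}$ (the fact that $y$ could lie on $\partial\Omega$ causes no problem, as the John inequality is only required for $z\in i(\gamma)$). The concatenation can be reparametrized by arclength on $[0,\ell(\gamma)]$ in the standard way.

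Next comes the main estimate. Fix $z\in i(\gamma)$ and let $k$ be any index with $z\in i(\gamma_k)$. Bounding the tail length by a geometric sum gives
\[
\ell\bigl(\gamma(z,y)\bigr)\leq \ell(\gamma_k)+\sum_{j>k}\ell(\gamma_j)\leq \sum_{j\geq k} M\eta^j r = \frac{M\eta^k r}{1-\eta}.
\]
Since $z\in i(\gamma_k)$, the hypothesis $d_{\Omega}(\gamma_k)>\frac{1}{M}\eta^k r$ yields $\eta^k r< M\,d_\Omega(z)$, and substituting gives
\[
\ell\bigl(\gamma(z,y)\bigr)\leq \frac{M^2}{1-\eta}\,d_\Omega(z),
\]
so one may take $c=M^2/(1-\eta)$.

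I do not anticipate any real obstacle; the lemma is essentially forced by the hypotheses together with geometric summation. The only mildly subtle point is the infinite case, handled by completeness of $X$, which holds under the standing assumptions recalled in Remark~\ref{geodesic}.
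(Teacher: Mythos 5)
Your proof is correct and follows essentially the same route as the paper: bound the tail length by the geometric series $\sum_{j\geq k}M\eta^j r\leq \tfrac{M\eta^k r}{1-\eta}$ and convert $\eta^k r$ to $M\,d_\Omega(z)$ via the hypothesis $d_\Omega(\gamma_k)>\tfrac{1}{M}\eta^k r$, arriving at the same constant $c=M^2/(1-\eta)$. Your treatment of the infinite case via completeness and Cauchy partial endpoints is a slightly more explicit version of the paper's ``pass to the limit'' remark, but the argument is the same.
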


\begin{proof}
Denote by $z_{k-1}$ and $z_k$ the initial and terminal points, respectively, of $\gamma_k$. First, we show that $\gamma_k$ is $M^2$--John for each $k$. Fixing $k$ and $z\in i(\gamma_k)$, we have 
$$
\ell\big(\gamma_k(z,z_k) \big)\leq \ell(\gamma_k)\leq M\eta^k r = M^2\left(\frac{1}{M}\eta^k r \right)<M^2d_{\Omega}(\gamma_k)\leq M^2d_{\Omega}(z).
$$

Begin by assuming that $\gamma$ is comprised of $N$ paths. We show that $\gamma=\gamma_1\cup\gamma_2\cup\ldots\cup \gamma_{N}$ is $\tfrac{M^2}{1-\eta}$-John. If $z\in i(\gamma_k)$, then 
we have that
\begin{eqnarray*}
\ell\big(\gamma(z,z_N) \big)&=&\ell\big(\gamma_k(z,z_k) \big)+\ell(\gamma_{k+1})+\ldots+\ell(\gamma_N)\\
&\leq& M(\eta^{k}+\ldots+\eta^{N})r\\
&<& M(\eta^{k}+\ldots+\eta^{N})M\eta^{-k}d_{\Omega}(\gamma_k)\\
&\leq & \frac{M^{2}}{1-\eta}d_{\Omega}(z).
\end{eqnarray*}

Thus $\gamma$ is $\frac{M^2}{1-\eta}$--John when $\gamma$ is formed by a finite union. As the estimate does not depend on $N$, we can pass to the limit and the result holds also for an infinite sequence $\{\gamma_{k}\}_{k=1}^{\infty}$.
\end{proof}

Now we present some preliminary estimates that will be needed in proving that the set $P_{\infty}$ has large enough Hausdorff content.

The next lemma follows from the Poincar\'e inequality and it is our key tool for the proof of Theorem \ref{maintheorem}. 
It transforms the information of the Poincar\'e inequality from integrals to estimates on Hausdorff content of level sets.
We use it to estimate the number of points in the sets $P_{k}$ that were constructed in the previous section.
For the proof see Theorem 5.9 in \cite{hk98}. The case $p=1$ and $s=Q-1$ follows from combining the arguments from the proofs of Theorem 5.9 in \cite{hk98} and Theorem 3.6 in \cite{kkst}.

\begin{lemma}\label{lem:hk}
Suppose that $0\leq Q-p<s\leq Q-1$, or $p=1$ and $s=Q-1$,
and $E,F\subset B(r)$ are compact. If
\[
\min\{\mathcal H^{s}_{\infty}(E),\mathcal H^{s}_{\infty}(F)\}\geq \lambda r^{s}
\]
for some $0<\lambda\leq 1$, then for any Lipschitz function $f$ such that $f=1$ in $E$ and $f=0$ in $F$, we have
\[
\int_{B( r)}\!(\mathrm{Lip} f)^p\,d\mu\geq  \frac{1}{C}\lambda r^{Q-p},
\]
where $C=C(s,p,Q,c_A,c_P)\geq 1$.
\end{lemma}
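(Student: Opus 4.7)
The lemma is a standard capacity-to-content estimate derived from the Poincar\'e inequality, and I would prove it by combining a pointwise telescoping bound, valid everywhere because $f$ is continuous, with a Vitali-type covering argument calibrated to Hausdorff content.

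After normalising so that $f_{B(r)} \leq 1/2$ (replacing $f$ by $1-f$ and swapping $E$ and $F$ if necessary), I have $|f(x) - f_{B(r)}| \geq 1/2$ for every $x \in E$. Writing $B_k(x) = B(x, 2^{-k} r)$ and applying the weak $(1,p)$--Poincar\'e inequality (with $\tau = 1$ by Remark \ref{geodesic}) to each link of the telescope gives
\[
|f_{B_{k+1}(x)} - f_{B_k(x)}| \leq C \cdot 2^{-k} r \left( \dashint_{B_k(x)} (\mathrm{Lip} f)^p\, d\mu \right)^{1/p}.
\]
Continuity of $f$ gives $f_{B_k(x)} \to f(x) = 1$, so summing from $k = 0$ yields
\[
\tfrac{1}{2} \leq C \sum_{k=0}^{\infty} 2^{-k} r \left( \dashint_{B_k(x)} (\mathrm{Lip} f)^p\, d\mu \right)^{1/p}.
\]
Fixing $\delta \in (0,(s-(Q-p))/p)$, a geometric-series (Chebyshev) argument forces some $k(x) \geq 0$ at which the summand beats $c \cdot 2^{-k\delta}$; setting $\rho = \rho(x) = 2^{-k(x)} r$ and $B_x = B(x,\rho)$, Ahlfors regularity then upgrades this to
\[
\int_{B_x} (\mathrm{Lip} f)^p\, d\mu \geq c\, r^{-p\delta}\, \rho^{\,Q-p+p\delta}.
\]

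Applying the $5r$--covering lemma to $\{B_x\}_{x \in E}$ yields a pairwise disjoint subfamily $\{B_i\}$ with $E \subset \bigcup_i 5B_i$. Since the exponent $s - (Q-p+p\delta)$ is positive by choice of $\delta$, and $\rho_i \leq r$,
\[
\mathcal{H}^s_\infty(E) \leq 5^s \sum_i \rho_i^{\,s} \leq 5^s\, r^{\,s-(Q-p+p\delta)} \sum_i \rho_i^{\,Q-p+p\delta} \leq C\, r^{\,s-(Q-p)} \int_{B(r)} (\mathrm{Lip} f)^p\, d\mu,
\]
using disjointness in the last step. Comparing with the hypothesis $\mathcal{H}^s_\infty(E) \geq \lambda r^s$ produces $\lambda r^{Q-p} \leq C \int_{B(r)} (\mathrm{Lip} f)^p\, d\mu$, which is the desired bound. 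The main obstacle I anticipate is the endpoint case $p = 1$, $s = Q-1$, where the buffer $\delta$ collapses and the scale-selection step above breaks down; as indicated by the hint, I would handle it by supplementing the $p=1$ Poincar\'e inequality with a coarea/isoperimetric decomposition (as in Theorem 3.6 of \cite{kkst}), summing the resulting estimate over level sets of $f$ in place of dyadic scales around a single point.
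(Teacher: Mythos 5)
The paper does not actually prove this lemma; it simply cites \cite[Theorem 5.9]{hk98} for the main case and indicates that the endpoint follows by combining that argument with \cite[Theorem 3.6]{kkst}. Your argument reconstructs, in detail, essentially the proof behind the Heinonen--Koskela citation: telescope the Poincar\'e inequality over concentric balls at each $x\in E$, run a Chebyshev-against-geometric-series scale selection (which needs the strict inequality $Q-p<s$ to supply the buffer $\delta>0$), then push the resulting pointwise bound up to $\mathcal H^{s}_{\infty}(E)$ via a $5r$--covering. This is the right route and the final comparison $\lambda r^{s}\lesssim r^{s-(Q-p)}\int(\mathrm{Lip}f)^{p}\,d\mu$ is the correct one, with constants depending only on $s,p,Q,c_A,c_P$ as claimed.

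Two caveats. (i) For $x$ near $\partial B(r)$ the ball $B_0(x)=B(x,r)$ is not contained in $B(r)$, nor conversely; so the telescoping controls $|f(x)-f_{B_0(x)}|$ rather than $|f(x)-f_{B(r)}|$, and you need one further Poincar\'e link between $f_{B_0(x)}$ and $f_{B(r)}$ (routine, since $B(r)\subset 2B_0(x)$ with comparable radii). Likewise the Vitali family lies only in $B(2r)$, so your integral ends up over $B(2r)$ rather than $B(r)$; this is harmless (relabel radii when applying the lemma, or note that a definite fraction of the content of $E$ lies in $\tfrac12 B(r)$), but it should be stated rather than passed over. (ii) The endpoint $p=1$, $s=Q-1$ is not actually proved: there $(s-(Q-p))/p=0$, so no admissible $\delta$ exists, and the geometric series in the scale-selection step no longer converges against the $\mathcal H^{Q-1}$ threshold. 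You diagnose the obstruction correctly and name the correct remedy (the coarea/boxing argument of \cite{kkst}), but you leave it as a sketch. Since the paper itself only cites the two references, deferring this case is consistent with the paper, but in a self-contained write-up you should flag the endpoint as a genuinely separate argument rather than a perturbation of the main one.
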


The next lemma enables us to estimate the number of balls in the families $P_{k}$.
\begin{lemma}
\label{lem:numberofballs}
Fix $0\leq Q-p<s\leq Q-1$, or $p=1$ and $s=Q-1$.
Let $\Omega\subset X$ be a domain, $\omega\in\partial\Omega$, $0<r<\diam(\Omega)$, $B=B(\omega,r)$, and $\eta>0$ sufficiently small (say $\eta<\eta_1<1$, which will be specified in the proof). Assume that 
\begin{equation}\label{eqn:bigboundary}
\mathcal{H}_{\infty}^{s}\big( B \cap \partial\Omega \big)\geq {C}_0 r^{s} 
\end{equation}
holds 
for some constant ${C}_0>{0}$. Suppose that there exists $z_{B}\in\partial B\cap \Omega$ such that $B(z_{B},r/2)\subset \Omega$. Let $\{B_i(\eta r)\}_{i=1}^{N}$ be a maximal collection of balls inside $B\cap\Omega$ that is well placed along $\partial\Omega$ and such that $\{B_i(\eta r)\}_{i=1}^{N}\cup \{B(z_{B},\eta r)\}$ is a subset of a finitely chainable collection of balls. Then 
$$
N\geq \frac{1}{K}\frac{1}{\eta^{Q-p}},
$$ 
where $K=K({C}_0,s,p,Q,c_A,c_P)$.
\end{lemma}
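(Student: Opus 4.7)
The idea is to apply Lemma~\ref{lem:hk} to a large-content piece of $B\cap\partial\Omega$ paired with an interior witness $F=\overline{B(z_B,r/4)}$, and match the resulting $p$-capacity lower bound against an upper bound of order $N(\eta r)^{Q-p}$ realized by an explicit Lipschitz separator built from $\{B_i\}$ and the ambient chainable collection. Combining the two inequalities forces $N\gtrsim \eta^{-(Q-p)}$.

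First I would establish a content dichotomy. Fix a geometric constant $C_\ast$ (chosen below) and set $E_1=(B\cap\partial\Omega)\cap\bigcup_i C_\ast B_i$ and $E_2=(B\cap\partial\Omega)\setminus E_1$. Countable subadditivity of Hausdorff content gives $\mathcal{H}^s_\infty(E_1)\leq N(C_\ast\eta r)^s$; if this exceeds $C_0 r^s/2$, then $N\geq (C_0/2C_\ast^s)\eta^{-s}\geq (C_0/2C_\ast^s)\eta^{-(Q-p)}$ using $\eta<1$ and $s\geq Q-p$, and we are done. Otherwise the hypothesis and subadditivity give $\mathcal{H}^s_\infty(E_2)\geq C_0 r^s/2$, while $Q$-Ahlfors regularity and $B(z_B,r/2)\subset \Omega$ yield $\mathcal{H}^s_\infty(F)\gtrsim r^s$. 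Lemma~\ref{lem:hk} applied on the ball $B(\omega,2r)\supset E_2\cup F$ then guarantees
\[
\int_{B(\omega,2r)}(\mathrm{Lip}\,f)^p\,d\mu\;\geq\;\frac{\lambda}{C}\,r^{Q-p}
\]
for every Lipschitz $f$ with $f=1$ on $E_2$ and $f=0$ on $F$, where $\lambda$ depends only on $C_0,s,Q,c_A$.

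It remains to produce such an $f$ with energy $\lesssim (N+1)(\eta r)^{Q-p}$. Let $T$ be the union of all balls in the ambient finitely chainable collection $\tilde{\mathcal{B}}\supseteq\{B_i\}\cup\{B(z_B,\eta r)\}$, extended to be maximal, and define $f(x)=\min(d(x,T)/(\eta r),1)$. The chainability of all $\eta r$-balls inside $B(z_B,r/2)\subset \Omega$ places $F$ inside $T$, so $f=0$ on $F$. The key \emph{geometric claim} is that $d(\omega',T)\geq \eta r$ for every $\omega'\in E_2$, ensuring $f=1$ on $E_2$: a witness $y\in T$ with $d(y,\omega')<\eta r$ lies inside some $B(x',\eta r)\in \tilde{\mathcal{B}}$, and then a point $x^\ast$ on the geodesic from $x'$ to $\omega'$ with $d(x^\ast,\omega')=\eta r$ yields a well-placed ball $B(x^\ast,\eta r)$ touching $\omega'$ with $d(x^\ast,x')<\eta r$, chainable through $B(x',\eta r)$ to $B(z_B,\eta r)$; maximality of $\{B_i\}$ would then force $\omega'\in 10B_i$ for some $i$, contradicting $\omega'\notin C_\ast B_i$ once $C_\ast\geq 10$.

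The \emph{main obstacle} is the matching energy upper bound. Since $(\mathrm{Lip}\,f)^p\leq(\eta r)^{-p}$ and is supported on the transition region $U=\{0<d(x,T)<\eta r\}$, we need $\mu(U)\leq K''(N+1)(\eta r)^Q$. The plan is to show that $U$ is contained in a controlled dilate of $\bigcup_i B_i\cup B(z_B,\eta r)$ by the same kind of maximality argument: a transition point far from every $B_i$ and from $B(z_B,\eta r)$ would correspond to an unrecorded well-placed chainable ball. The delicate point is controlling contributions from possibly narrow necks of $B\cap\Omega$, where $T$ can terminate abruptly, and absorbing a negligible annular contribution near $\partial B$; making this covering quantitative is the technical heart of the proof. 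Once this is in place, comparing the two bounds and choosing $\eta_1$ small enough to absorb the additive ``$+1$'' gives $N\geq K^{-1}\eta^{-(Q-p)}$ for a constant $K=K(C_0,s,p,Q,c_A,c_P)$, as claimed.
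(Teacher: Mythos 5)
Your overall strategy matches the paper's: apply Lemma~\ref{lem:hk} to a compact pair $(E,F)$ inside a Poincar\'e ball, build an explicit Lipschitz separator whose gradient is localized near the $B_i$'s, and match the two bounds. The dichotomy into $E_1=(B\cap\partial\Omega)\cap\bigcup_i C_\ast B_i$ and $E_2$ is a valid extra move and would be harmless; the paper avoids it by designing $f$ so that $f\equiv 1$ on \emph{all} of $E=B\cap\partial\Omega$, which lets one feed the hypothesis \eqref{eqn:bigboundary} into Lemma~\ref{lem:hk} directly. Your geometric claim that $E_2$ lies at distance $\gtrsim\eta r$ from $T$ is essentially the same maximality-plus-geodesic argument the paper uses to show $f\equiv 1$ on $E\cap D$.

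The genuine gap is exactly where you flag it: the energy upper bound. With $f(x)=\min\bigl(d(x,T)/(\eta r),1\bigr)$ the set $U=\{0<d(\cdot,T)<\eta r\}$ where $\mathrm{Lip}\,f$ can be nonzero is the full exterior $\eta r$--collar of $T$, and there is no reason this collar is covered by $\lesssim N$ balls of radius $\eta r$. The collar contains points where $T$ ``terminates'' for reasons unrelated to $\partial\Omega$ --- points near the sphere bounding your Poincar\'e ball, or in $\Omega\cap B$ where chainability breaks down without the nearby $T$--ball touching $\partial\Omega$ --- and such points need not be close to any $B_i$. Turning maximality of $T$ and of $\{B_i\}$ into a quantitative covering of $U$ is precisely what you call the ``technical heart,'' and the proposal does not supply it; as written it is a conjectured inequality, not a proof.

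The paper's test function is engineered to avoid ever having to prove that covering. It sets $f\equiv 1$ on $B\setminus D$ (with $D=\bigcup_{\mathcal B}20B$) and, on $B\cap D$, it sets $f=g$ where $g(x)=\max_i\bigl[1-d(x,40B_i)/(\eta r)\bigr]_+$. This makes the support of $\mathrm{Lip}\,f$ contained in $\bigcup_i 41B_i$ \emph{by construction}, regardless of where $T$ terminates, giving the clean bound $\int_B(\mathrm{Lip}\,f)^p\,d\mu\le Nc_A 41^Q(\eta r)^{Q-p}$ immediately. The only thing to verify is that the two pieces match continuously at $\partial D\cap B$ (i.e.\ $g=1$ there), which reduces to the same kind of maximality argument you already use for $E_2$ but applied at the interface $\{d(\cdot,\partial\Omega)=\eta r\}\cap T$, where the well-placedness of $\{B_i\}$ does give coverage. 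So if you want to salvage your one-piece distance function, you would need to either (i) prove the covering of $U$ by $\lesssim N$ balls of radius $\eta r$ and separately handle the contribution near the Poincar\'e sphere, or (ii) switch to a two-piece function of the paper's type, which converts the covering problem into a pointwise continuity check.
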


\begin{proof}
Let $F= \overline{B(z_{B},r/4)\cap B}$ and $E=\partial \Omega\cap B$, which are compact subsets of $B$ as $X$ is proper. Let $\mathcal B$ be a maximal chainable collection of balls of radius $\eta r$ in $\Omega$ such that 
\begin{enumerate}
\item[(i)] $\{B_i(\eta r)\}_{i=1}^{N} \cup \{B(z_{B},\eta r)\}\subset \mathcal B$,
\item[(ii)]  $x\in  B$ for each $B(x,\eta r)\in \mathcal B$, and 
\item[(iii)] $d(x,y)\geq \eta r/2$ whenever $x\neq y$ and $B(x,\eta r), B(y,\eta r)\in \mathcal B$. 
\end{enumerate}
As the distance of the centres of the balls in $\mathcal B$ is bounded by $\eta r/2$ from below, the number of the balls in $\mathcal B$ is bounded by some constant $S=S(\eta,c_{A},Q)$.

Now define
\[
f(x)=\begin{cases}
1, & x\in  B\setminus D\\
g(x)=\max\limits_{1\leq i\leq N}\left[1-\frac{d(x,40B_{i})}{\eta r}\right]_{+}, & x\in   B\cap D.
\end{cases}
\]
Here
\[
D= \bigcup_{B\in\mathcal B}20B,
\] 
i.e. it consists of all points that are close to (or in) the maximal chainable set.
Now $f=1$ in $E$ and  $f=0$ in $F$. The first claim is clear if $x\in E\setminus D$. Let us consider the case $x\in E \cap D$. This means that $x\in B(y,20\eta r)$ with some $B(y,\eta r)\in\mathcal B$. By considering balls of radius $\eta r$ that have centres on the geodesic connecting $y$ to $x$, we find a point $\widetilde y$ such that $d(x,\widetilde y)\leq d(x,y)$, $B(\tilde y,\eta r)$ is in $\Omega$ and touches the boundary of $\Omega$. As $\{B_i(\eta r)\}_{i=1}^{N}$ is a maximal collection of balls inside $B\cap\Omega$ that is well placed along $\partial\Omega$, there exists $i\in 1,2,\ldots N$ such that $B(\tilde y,4\eta r)\cap 4B_i\neq \emptyset$ as, otherwise, we could add $B(\tilde y,\eta r)$ to the collection. It follows that the distance of $x$ to the centre of $B_i$ is at most $28\eta r$ and consequently $f(x)=1$.

The claim that $f=0$ on $F$ holds clearly whenever the balls $41B_i$ do not touch $F$. As each ball $B_i$ touches the boundary of $\Omega$ and $B(z_B,r/2)\subset \Omega$, this holds at least when $42\eta r<r/2-r/4$ i.e. if $\eta<\eta_1\leq 164$. 

Moreover, $f$ is a $\tfrac{1}{\eta r}$--Lipschitz function in $B$. To see this, notice first that $g$ is $\tfrac{1}{\eta r}$--Lipschitz as a maximum of $\tfrac{1}{\eta r}$--Lipschitz functions. As $f$ is constant outside $D$, we only need to check what happens at $\partial  D\cap B$. Notice that the balls $10B_{i}, i=1,2,\ldots,N$ cover the subset of $\{x\in \Omega\cap B\,:\, d(x,\partial\Omega)=\eta r\}$ that belong to the union of the balls in $\mathcal B$ as, otherwise, we could add more balls to the collection of balls that are well placed along $\partial \Omega$. If $x\in \partial D \cap (\Omega\cap  B)$ then $x\in 20B_{i}$ for some $i\in\{1,2,\ldots,N\}$ and consequently $g(x)=1$.

The local Lipschitz constant of $f $ satisfies
\begin{eqnarray*}
\mathrm{Lip}f(x)=0& \text{if} & x\notin \bigcup_{i=1}^{N}41 B_i \\ 
0\leq \mathrm{Lip}f(x)\leq \frac{1}{\eta r}& \text{if} & x\in \bigcup_{i=1}^{N}41 B_i.
\end{eqnarray*}
This shows that $\mathrm{Lip}f$ is integrable on $  B$. Thus, 
using that $\mu$ is $Q$--Ahlfors regular,
$$
\int_{B}\!(\mathrm{Lip}f)^p\,{d}\mu\leq \frac{1}{(\eta r)^p}\mu\left( \bigcup_{i=1}^{N}41 B_i  \right) \leq \frac{1}{(\eta r)^p}\sum_{i=1}^{N}c_{A} (41\eta r)^Q= Nc_{A}41^Q(\eta r)^{Q-p}.
$$
Lemma \ref{lem:hk} implies that $\int_{ B}( \mathrm{Lip}f)^p d\mu\geq Cr^{Q-p}$. Combining this with the previous estimate, we obtain

\begin{equation}\label{eqn:N}
N\geq\frac{1}{K}\frac{1}{\eta^{Q-p}},
\end{equation}
where $K$ only depends on the data related to the space and $C_{0}$, i.e. $K=K(C_{0},s,p,Q,c_A,c_P)$.

\end{proof}

\begin{lemma}
\label{lem:1}
Let $P_\infty$ be as in the construction and
$\eta_1, K$ be as in Lemma \ref{lem:numberofballs}. Suppose that \eqref{eqn:bigboundary} is satisfied with some $s$ for all $\omega\in\partial\Omega$ and $0<r<r_{0}$. Fix $0<\eps<s$ and $0<\eta<\eta_2=\min(\eta_1,K^{-2/\eps})$. 
Then $\mathcal{H}^{s-\eps}_{\infty}(P_\infty) \geq Cr_{0}^{s-\eps}$,
where  $C=C(C_{0},s,p,Q,c_A,c_P,\eps,\eta)>0$. 
\end{lemma}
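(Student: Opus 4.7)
My plan is to apply the mass distribution principle: build a Borel measure $\nu$ on $P_\infty$ that splits mass iteratively down the tree from Section \ref{section:construction}, verify the decay $\nu(B(x,r))\leq Cr^{s-\eps}$ for every ball, and then conclude via the standard estimate $\mathcal{H}^{s-\eps}_\infty(E)\geq \nu(E)/C$. First I would fix the Poincar\'e exponent by $Q-p=s-\eps/2$, placing us in the regime $0<Q-p<s$ of Lemma \ref{lem:numberofballs}. At each step of the construction, for $\omega\in P_k$, I apply that lemma to $B=B(\omega,2\eta^k r_0)$, taking as $z_B$ the centre $z$ of the previous-level ball on whose boundary $\omega$ sits; this $z$ satisfies $B(z,\eta^k r_0)\subset\Omega$, i.e.\ $B(z,r/2)\subset\Omega$ with $r=2\eta^k r_0$. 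The lemma then yields a uniform lower bound
\[
N_\omega\geq N:=K^{-1}\eta^{-(s-\eps/2)}
\]
on the number of children of $\omega$. The restriction $\eta<\eta_2\leq K^{-2/\eps}$ sharpens this to $N\eta^{s-\eps}=K^{-1}\eta^{-\eps/2}\geq 1$, i.e.\ $N\geq\eta^{-(s-\eps)}$, which will be the decisive numerical inequality.

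Next I would distribute mass: set $\nu_0=r_0^{s-\eps}\delta_{\omega_0}$, and for each child $\omega'\in P_{k+1}$ of $\omega\in P_k$ set $\nu_{k+1}(\omega')=\nu_k(\omega)/N_\omega$. By induction $\nu_k(\omega)\leq r_0^{s-\eps}/N^k$ for every $\omega\in P_k$, the total mass at every level is $r_0^{s-\eps}$, and the $\nu_k$'s are consistent with the tree partition $P_\infty=\bigsqcup_{\omega\in P_k}T_\omega$ (where $T_\omega$ is the closure in $P_\infty$ of the descendants of $\omega$). They therefore assemble into a Borel measure $\nu$ on $P_\infty$ with $\nu(T_\omega)=\nu_k(\omega)$ and $\nu(P_\infty)=r_0^{s-\eps}$.

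For the growth estimate $\nu(B(x,r))\leq Cr^{s-\eps}$: when $r\geq r_0$ it is trivial. When $\eta^{k+1}r_0\leq r<\eta^k r_0$ for some $k\geq 0$, the descendants of any $\omega\in P_k$ lie within $B(\omega,C_1\eta^k r_0)$ with $C_1=3/(1-\eta)$ (each construction step moves a point by at most $\approx 3\eta^j r_0$), so $T_\omega\cap B(x,r)\neq\emptyset$ forces $\omega\in B(x,(C_1+1)\eta^k r_0)$. An induction on $k$ using the well-placed condition (same-parent separation $>6\eta^k r_0$) together with the parent separation inherited from level $k-1$ (cross-parent separation $\geq 2(1-\eta)\eta^{k-1}r_0$) shows that $P_k$ is $6\eta^k r_0$-separated; Ahlfors regularity then bounds the number of such $\omega$ by a constant $C_2$. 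Hence
\[
\nu(B(x,r))\leq C_2\,\frac{r_0^{s-\eps}}{N^k}\leq C_2(\eta^k r_0)^{s-\eps}\leq C_2\eta^{-(s-\eps)}r^{s-\eps},
\]
using $N^k\geq\eta^{-k(s-\eps)}$ and $\eta^k r_0\leq r/\eta$. Applying the mass distribution principle to any countable cover of $P_\infty$ by balls $\{B(y_i,r_i)\}$ yields $r_0^{s-\eps}=\nu(P_\infty)\leq C_2\eta^{-(s-\eps)}\sum_i r_i^{s-\eps}$, so $\mathcal{H}^{s-\eps}_\infty(P_\infty)\geq(\eta^{s-\eps}/C_2)\,r_0^{s-\eps}$, which is the desired conclusion.

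The main technical obstacle is the inductive separation estimate for $P_k$: the well-placed property is only an internal condition within each family $\mathcal{B}_{k+1}(\omega)$, so obtaining uniform separation across all branches requires combining the parent separation from level $k-1$ with the containment $\mathcal{B}_{k+1}(\omega)\subset B(\omega,2\eta^k r_0)$. Aside from this, one should verify at each step that the application of Lemma \ref{lem:numberofballs} with $z_B=z$ is legitimate, which is a routine geometric check that the construction seems to have been designed to facilitate.
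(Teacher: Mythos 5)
Your proof is correct and takes essentially the same approach as the paper: both arguments construct a Frostman-type measure $\nu$ on $P_\infty$ by splitting mass down the tree with weights $1/N_\omega$, use Lemma~\ref{lem:numberofballs} (via the choice $Q-p=s-\eps/2$) to get the branching lower bound $N_\omega\geq K^{-1}\eta^{-(Q-p)}$, invoke the threshold $\eta<K^{-2/\eps}$ to convert this into $N^k\geq\eta^{-k(s-\eps)}$, establish the growth bound $\nu(B(r))\lesssim r^{s-\eps}$, and finish by the mass distribution principle. The only differences are cosmetic: the paper normalizes $r_0=1$ and works with a probability measure rather than one of total mass $r_0^{s-\eps}$; the paper splits the case $s=Q-1$ by taking $p=1$, whereas your single choice $p=Q-s+\eps/2$ lands in the valid regime $0\leq Q-p<s\leq Q-1$ of Lemma~\ref{lem:hk} in both cases (and is covered by the standing Poincar\'e hypothesis via H\"older); and you are more explicit about the separation estimate for $P_k$ and why a ball at scale $\eta^k r_0$ meets only boundedly many descendant sets, a point the paper compresses into the assertion that such a ball meets ``at most one'' $2B_j$.
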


\begin{proof}
Fix $0<\eps<s$. Let $p=Q-s+\tfrac{\eps}{2}$ if $s<Q-1$ and $p=1$ if $s=Q-1$. Assume without loss of generality that $r_0=d_{\Omega}(z_0)=1$.

Let us construct a sequence of probability measures $\{\nu_k\}_{k=0}^\infty$ as follows. First, we set
\[
\nu_0=\delta_{\omega_0},
\]
where $P_0=\{\omega_0\}$, using the notation built up in Section \ref{section:construction}.
For $k\geq1$, we define the measures as follows. For every $\omega\in P_{k-1}$, let $P_\omega\subset P_k$ denote the $k$th generation descendants of $\omega$ and $N_\omega$ the number of points in $P_\omega$. Then we define $\nu_k$ to be the measure supported on $P_k$ that satisfies
\[
\nu_k=\sum_{\omega\in P_{k-1}}\frac{1}{N_\omega}\nu_{k-1}(\omega)\delta_{P_\omega}.
\]

One can show that the sequence $\{\nu_k\}_{k=0}^\infty$ converges weakly to some limiting probability measure $\nu$, in the sense that
$$
\lim_{k\rightarrow\infty}\int_{X}\!f\,{d}\nu_k=\int_{X}\!f\,{d}\nu
$$
for every bounded continuous function $f$ on $X$. Moreover, $\nu$ is supported on $P_\infty$.

In order to prove the lemma, it suffices to show that the measure $\nu$ satisfies $\nu\big(B(r)\big)\lesssim r^{s-\eps}$ for every ball $B(r)$ that intersects $P_\infty$. Then, for any cover $\{B(r_i)\}$ of $P_\infty$ it would follow that
\[
1=\nu(P_\infty)\leq \sum_{i}\nu\big(B(r_i) \big)\lesssim \sum_{i}r_{i}^{s-\eps}.
\]
Hence, taking an infimum over all covers of $P_\infty$ by balls,
\[
\mathcal{H}^{s-\eps}_{\infty}(P_\infty) \gtrsim 1,
\]
as desired.

Write $N_k$ for the number of points in $P_k$. From the construction, $N_0=1$, and, by Lemma \ref{lem:numberofballs}, since $\eta<\eta_1$, $N_{k}\geq K^{-1}\eta^{-(Q-p)}N_{k-1}$ for all $k\geq{1}$. From here it follows that
\[
\nu_k(2B_j)\leq K\eta^{Q-p}\nu_{k-1}(2B_{j-1}),
\]
where $B_j\in\mathcal{B}_j$, $B_{j-1}\in\mathcal{B}_{j-1}$, and $2B_j\subset 2B_{j-1}$. Applying this $j-1$ more times yields
\[
\nu(2B_j)\leq K^{j}\eta^{j(Q-p)}\leq\eta^{j(Q-p-\eps/2)} \leq \eta^{j(s-\eps)},
\]
as $\eta$ satisfies $0<\eta<K^{{-2/\eps}}$. 

Fix a ball $B\subset{X}$ that intersects $P_\infty$. If its radius $r$ satisfies $r\geq{1}$, then 
$$
\nu(B)\leq 1\leq r^{s-\eps}.
$$
If $r<1$, however, consider $j$ such that $\eta^{j+1}\leq r <\eta^{j}$. Then $B$ intersects at most one ball $2B_j$, where $B_j\in\mathcal{B}_j$. It follows that for $0<\eta<K^{-2/\eps}$, 
$$
\nu(B)\leq \nu(2B_j)\leq \eta^{j(s-\varepsilon)}\leq \eta^{-s+\eps}r^{s-\eps},
$$
completing the proof.
\end{proof}

\begin{proof}[Proof of Main Theorem]
Fix $0<\eps<s$, $z_0\in\Omega$, $r=r_0=d_\Omega(z_0)$, and $0<\eta<\eta_2$. It suffices to show that $P_\infty\subset\partial\Omega_{z_0}(c) \cap \partial \Omega$ for some $c\geq 1$ as then, by Lemma \ref{lem:1}, we would have 
$$
\mathcal{H}^{s-\eps}_{\infty}( \partial\Omega_{z_0}(c) \cap \partial \Omega  )\geq \mathcal{H}^{s-\eps}_{\infty}(P_\infty)\geq Cr_0^{s-\eps},
$$ 
where $C$ is as in the lemma.

Suppose that $\omega\in P_k$ for some $k$. That is, there exists a $B_k=B(z_k,\eta^kr)\in\tilde{\mathcal{B}}_k$ such that $\omega\in \partial B_k\cap\partial\Omega$ for some $z_k\in\Omega$. Since $X$ is geodesic and $B_k\subset\Omega$, we can connect $\omega$ to $z_k$ by a $1$--John curve, $\gamma_{k+1}$. By construction, $\tilde{\mathcal{B}}_k$ satisfies the conditions of Lemma \ref{lem:2b} and so there exists a path $\gamma_{k}$ connecting $z_k$ to $z_{k-1}$, where $z_{k-1}$ is the centre of some ball $B_{k-1}\in\tilde{\mathcal{B}}_{k-1}$, such that
$$
d_{\cup\tilde{\mathcal{B}}_k}(\gamma_k)\geq\frac{\eta^{k}r}{2} \qquad\text{and}\qquad\ell(\gamma_{k})\lesssim \eta^{k} r.
$$ 
Applying Lemma \ref{lem:2b} again $k-1$ more times yields a sequence of paths $\{\gamma_j\}_{j=1}^{k}$ such that $\gamma=\gamma_1\cup\gamma_2\cup\ldots\cup\gamma_k$ connects $z_k$ to $z_0$ and such that for each $j=1,2,\ldots,k$,
$$
d_{\cup\tilde{\mathcal{B}}_j}(\gamma_j)\geq\frac{\eta^jr}{2} \qquad\text{and}\qquad\ell(\gamma_j)\lesssim \eta^j r.
$$
Thus, for some $M>1$, we have that
$$
d_{\Omega}(\gamma_j)>\frac{1}{M}\eta^jr\qquad\text{and}\qquad\ell(\gamma_j)\leq M \eta^j r
$$
for each $j=1,2,\ldots,k$. Therefore, applying Lemma \ref{lem:4John}, it follows that $\gamma$ is $c$--John for some $c$. The path $\tilde{\gamma}=\gamma\cup\gamma_{k+1}$ connecting $\omega$ to $z_0$ can then be shown to be $(1+c)$--John, demonstrating that $P_k$ is a subset of the visible boundary.  
As the obtained constant $c$ is independent of $k$, this approach shows that $\bigcup_{k}P_k$ is a subset of the visible boundary. In fact, since the visible boundary is closed, this implies that $P_\infty$ is a subset of the visible boundary, and this completes the proof.
\end{proof}

\begin{remark}\label{remark}
The construction of Theorem \ref{maintheorem} also works in doubling metric measure spaces that are not $Q$--Ahlfors regular. The doubling condition (together with the connectedness of the space) implies that there exist some constants $0<Q_{2}\leq Q_{1}<\infty$ such that
\[
\frac{1}{C} \left(\frac{r}{R}\right)^{Q_{1}}\leq\frac{\mu(B(y,r))}{\mu(B(x,R))}\leq C\left(\frac{r}{R}\right)^{Q_{2}}
\]
for all $x\in X$, $0<r\leq R<\infty$ and $y\in B(x,R)$.
In the doubling setting, it is more efficient to work with Hausdorff content of certain codimension. 
Our method of estimating the size of visible boundary works in principle with the doubling measure, but we seem to lose $Q_1-Q_2$ in the size of exponents.
\end{remark}

\section{Necessity of Assumptions}

We now give an example that demonstrates that it is necessary to assume that the boundary is thick at all locations and at all scales. In the first example (Theorem \ref{thm:example1}), the complement of the domain can be chosen to be a closure of a connected open set. In our second example (Theorem \ref{thm:example2}), we see that with disconnected complement, the visual boundary can be made very small.

\begin{theorem}\label{thm:example1}
Let $n\geq{3}$. Fix $c\geq{1}$, $0\leq\eps<n-1$, and $0<\eta<\tfrac14$. Then there exists a domain $\Omega\subset \R^n$ with connected boundary, and a $z_0\in\Omega$ such that, for $r=d_{\Omega}(z_0)$,
$$
\mathcal{H}_{\infty}^{n-1}\big( B(z_0,2r) \cap \partial\Omega \big)\geq r^{n-1}, 
$$
but
$$
\mathcal{H}^{n-1-\eps}_{\infty}( \partial\Omega_{z_0}(c) \cap \partial \Omega  ) \leq \eta r^{n-1-\eps}.
$$
\end{theorem}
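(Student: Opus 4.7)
After a rescaling we may assume $z_0 = 0$ and $r = d_\Omega(z_0) = 1$, so the goal becomes to build $\Omega \subset \R^n$ with $\mathcal H^{n-1}_\infty(B(0,2)\cap \partial\Omega)\geq 1$ and $\mathcal H^{n-1-\eps}_\infty(\partial\Omega_{z_0}(c)\cap\partial\Omega)\leq \eta$. The construction will depend on the (fixed) parameters $c$, $\eps$, $\eta$.

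The idea is to combine two features in one domain: a collection of thin ``tentacles'' that carry the bulk of the boundary mass inside $B(0,2)$ but are too narrow to admit $c$--John paths from $z_0$; and a central region containing $z_0$ whose intersection with $\partial\Omega$ is a small (low dimensional) set, so that the visible boundary is confined to a neighborhood of a thin contact set. Concretely, I would let $U_0 \subset B(0,2)$ be a ``cigar''-shaped open set containing $\overline{B(0,1)}$ and chosen so that $\partial U_0 \cap \partial B(0,1)$ reduces to a single point $\omega_0$, while $\partial U_0$ is otherwise in the interior of a larger domain. Then, attached to $\partial U_0$ at points $\omega_1,\dots,\omega_N$ disjoint from $\omega_0$, I would glue $N$ thin curved tubes $T_i$ of radius $\rho_i \ll 1$ and length $L_i$ with $L_i \gg c\rho_i$, each tube ending in a chamber $C_i \subset B(0,2)$. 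The tubes are routed using the extra room available for $n\geq 3$ so that they are pairwise disjoint, and the chambers $C_i$ have thick boundary with total $\mathcal H^{n-1}$-content summing to at least $1$.

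The verification is then in three steps. First, $d_\Omega(0)=1$ because $B(0,1)\subset U_0\subset\Omega$ touches $\partial\Omega$ precisely at $\omega_0$. Second, $\mathcal H^{n-1}_\infty(B(0,2)\cap \partial\Omega)\geq 1$ because the chambers $C_i$ together with the outer walls of $U_0$ contribute at least this much content, independent of the tubes. Third, and this is the substantive part, one checks using the calculation behind Lemma~\ref{lem:4John} in reverse that any $c$--John path from $z_0$ to a point $\omega \in \partial C_i$ must pass through $T_i$, and at interior points of $T_i$ we have $d_\Omega(z')\leq\rho_i$ while the remaining subpath to $\omega$ has length $\geq L_i>c\rho_i$; hence no such path exists. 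It follows that $\Omega_{z_0}(c) \subset U_0$ together with an at most $O(c\rho_i)$-collar into each tube entrance, and therefore $\partial\Omega_{z_0}(c)\cap\partial\Omega$ is contained in $\{\omega_0\}$ together with $N$ small caps of radius at most $c\rho_i$ near each $\omega_i$. Choosing $\rho_i$ small enough (depending on $N$, $c$, $\eps$) yields the bound $\eta$.

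The main obstacle I would expect is engineering $U_0$ so that $\partial U_0 \cap \partial\Omega$ (the part exposed to $\partial\Omega$, rather than internal to $\Omega$) is genuinely a thin set of controlled $(n-1-\eps)$-content, while still guaranteeing the inscribed ball $B(0,1)$ and the correct inradius at $z_0$. In particular, after attaching the tube mouths, $\partial U_0\cap\partial\Omega$ consists of the tube openings and the original tangency point $\omega_0$, and one must ensure these tube openings can each be placed inside a small ball on $\partial U_0$ without the largest $c$-John subdomain expanding into $U_0\setminus B(0,1)$ in some uncontrolled way. A calibration of the geometry of $U_0$ around $B(0,1)$ (so that small extensions beyond the inscribed ball immediately violate the John condition due to curvature/pinching of $U_0$) is the key quantitative input.
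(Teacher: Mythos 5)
Your self-identified obstacle is not a technicality; it is the fatal gap. In your construction $\Omega = U_0 \cup \bigl(\bigcup_i T_i\bigr) \cup \bigl(\bigcup_i C_i\bigr)$ with $\overline{B(0,1)}\subset U_0$, the region $U_0$ is itself a $c$--John domain (for some moderate $c$) with centre $z_0 = 0$. Hence $\Omega_{z_0}(c)$ contains $U_0$, and consequently $\partial\Omega_{z_0}(c)\cap\partial\Omega$ contains $\partial U_0\cap\partial\Omega$. Since the tubes are thin, removing the tube mouths barely affects $\partial U_0$, so $\partial U_0\cap\partial\Omega$ is a closed topological sphere minus small holes enclosing $B(0,1)$; any ball cover of it contains a ball of radius comparable to $1$, so its $(n-1-\eps)$--content is bounded below by a dimensional constant, which is $\gg\eta$ for $\eta$ small. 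There is no way to ``calibrate'' $U_0$ out of this: as long as $\partial U_0\cap\partial\Omega$ is most of $\partial U_0$ and $U_0$ is itself visible, the visible boundary is already too big; and if instead you try to arrange for most of $\partial U_0$ to be in the interior of $\Omega$, then $\Omega$ is no longer ``$U_0$ plus thin tubes and chambers'' and the whole control mechanism collapses.

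The paper's construction avoids this issue by inverting the geometry. Instead of building $\Omega$ as a John core with thin appendages, take $\Omega$ to be the \emph{large} ball $B(0,2)$ punctured by finitely many thin, connected, closed barriers $A_i\subset B(0,2)\setminus B(0,\tfrac32)$, each touching $\partial B(0,2)$, placed so that every point of $\partial B(0,\tfrac32)$ is within distance $\lesssim 1/c$ of some $A_i$. Then any curve from $0$ to $\partial B(0,2)$ must, at the moment it crosses $\partial B(0,\tfrac32)$, be very close to some $A_i$, which kills the $c$--John condition; so no outer-sphere point is visible. The crucial point is that $\partial\Omega_{z_0}(c)$ is still a large hypersurface surrounding $B(0,\tfrac32)$, but it lies almost entirely \emph{inside} $\Omega$ (in the open annulus) and touches $\partial\Omega$ only along the barriers $A_i$. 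Since each $A_i$ can be taken with $\mathcal H^{n-1}_\infty(A_i)\le\eta/N$ (using $n\ge3$ to make them curve-like), the visible boundary sits in a set of total content $\le\eta$, while the lower content bound for $\partial\Omega$ is immediate because $B(0,\tfrac32)\subset\Omega\subset B(0,2)$ traps a sphere of radius $r=\tfrac32$. This is the idea your proposal is missing: keep the John subdomain's boundary interior to $\Omega$ except at thin contact barriers, rather than trying to shrink the boundary of an explicitly built John core.

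As a secondary remark, your verification that $\mathcal H^{n-1}_\infty(B(0,2)\cap\partial\Omega)\ge 1$ by ``summing the chambers'' is also shaky: Hausdorff content is not additive, and disjoint small chambers inside $B(0,2)$ do not automatically accumulate content. The paper sidesteps this entirely with the trapped-sphere observation just mentioned.
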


\begin{proof}
Let $z_0=0$ and $\{p_i\}_{i=1}^{N}$ be a set of points on $\partial B\left(0,\frac{3}{2}\right)$ such that 
$$
\partial B(0,3/2)\subset\bigcup_{i=1}^{N}B(p_i,\tfrac{1}{2c}).
$$
For each $i=1,2,\ldots,N$, choose a connected and closed set $A_i\subset\R^n\setminus B(0,\frac{3}{2})$ containing $p_i$ such that $A_i\cap\partial B(0,2)\neq\emptyset$, $\operatorname{diam}(A_i)\leq 1$ and
$$
\mathcal{H}^{n-1-\eps}_{\infty}(A_i)\leq\frac{\eta}{N}.
$$
As $\operatorname{diam}(A_i)\leq 1$, this implies that 
$$\mathcal{H}^{n-1}_{\infty}(A_i)\leq\mathcal{H}^{n-1-\eps}_{\infty}(A_i)\leq\frac{\eta}{N}.
$$
Now, let
$$
\Omega=B(0,2)\setminus \bigcup_{i=1}^{N}A_i.
$$
From this, we see that $r=d_{\Omega}(z_0)=\frac{3}{2}$. 

Consider any path $\gamma$ connecting the origin to some point $\omega_0\in \partial\Omega\cap\partial B(0,2)$. Writing $z$ for the point in $i(\gamma)\cap\partial B(0,\frac{3}{2})$, we have that $\ell(\gamma(z,\omega_0))\geq \frac{1}{2}$ but $cd_{\Omega}(z)\leq\frac{1}{4}$ and so no points of $\partial B(0,2)$ are part of the visible boundary of $\Omega$ near $z_0$. 
Hence,
$$
\mathcal{H}^{n-1-\eps}_{\infty}(\partial\Omega_{z_0}(c) \cap \partial \Omega)\leq\sum_{i=1}^{N}\mathcal{H}^{n-1-\eps}_{\infty}(A_i)\leq \eta \leq \eta r^{n-1-\eps}
$$
but
\[
\mathcal{H}^{n-1}_{\infty}\big(B(0,2r)\cap\partial\Omega \big)=\mathcal{H}^{n-1}_{\infty}(\partial\Omega) \geq\mathcal{H}^{n-1}_{\infty}\big(\partial B(0,2)\setminus \bigcup_{i=1}^{N}A_i\big)\geq (1-\eta)2^{n-1}\geq r^{n-1}.
\]
\end{proof}

\begin{theorem}\label{thm:example2}
Let $n\geq{2}$ and fix $0\leq\eps<n-1$. There exists a domain $\Omega\subset \R^n$ and a $z_0\in\Omega$ such that, for $r=d_{\Omega}(z_0)$,
$$
\mathcal{H}_{\infty}^{n-1}\big( B(z_0,2r) \cap \partial\Omega \big)\geq r^{n-1}, 
$$
but
$$
\mathcal{H}^{n-1-\eps}_{\infty}( \partial\Omega_{z_0}(c) \cap \partial \Omega  )=0
$$
for any $c\geq 1$.
\end{theorem}

\begin{proof}
Let $z_0=0$ and fix $c\geq{1}$. Select a set of points $\{p_i\}_{i=1}^{N}$ on $\partial B(0,2-2^{1-c})$ such that 
$$
\partial B(0,2-2^{1-c})\subset\bigcup_{i=1}^{N}B(p_i,\tfrac{1}{c2^c}).
$$
Let
$$
\Omega=B(0,2)\setminus \bigcup_{i=1}^{N}p_i.
$$
From this, we see that $r=d_{\Omega}(z_0)=2-2^{1-c}$. 

Consider any path $\gamma$ connecting the origin to some point $\omega_0\in \partial B(0,2)$. Writing $z$ for a point in $i(\gamma)\cap\partial B(0,2-2^{1-c})$, we have that $\ell(\gamma(z,\omega_0))\geq 2^{1-c}$ but $cd_{\Omega}(z)\leq 2^{-c}$, implying that $\partial B(0,2)\cap \partial\Omega_{z_0}(c)=\emptyset$. Therefore,
$$
\mathcal{H}^{n-1-\eps}_{\infty}(\partial\Omega_{z_0}(c) \cap \partial \Omega)\leq\sum_{i=1}^{N}\mathcal{H}^{n-1-\eps}_{\infty}(p_i)=0
$$
but
\[
\mathcal{H}^{n-1}_{\infty}\big(B(z_0,2r)\cap\partial\Omega \big) \geq\mathcal{H}^{n-1}_{\infty}(\partial B(0,2))= 2^{n-1}\geq r^{n-1},
\]
where the inequality holds since $\partial\Omega$ contains $\partial B(0,2)$ (up to a finite set of points, if $c=1$). Recall that a set of a finite number of points has zero $s$--dimensional Hausdorff content so long as $s\neq 0$.

\end{proof}

\section{Acknowledgements}

The authors would like to thank the referees for their useful comments, corrections, and recommended improvements. The first author was partially supported by the Fonds de recherche du Qu\'{e}bec -- Nature et technologies (FRQNT). Part of the work for this project was done while the first author was visiting Aalto University; he would like to thank that institution for their kind hospitality. The second author was partially supported by Academy of Finland, project 308063.


\vskip .5cm

\noindent Author Information:

\vskip .3cm

\noindent Ryan Gibara

\noindent Address: Department of Mathematical Sciences, P.O. Box 210025, University of
Cincinnati, Cincinnati, OH 45221--0025, U.S.A.

\noindent Email: {\tt ryan.gibara@gmail.com}

\vskip .3cm

\noindent Riikka Korte

\noindent Address: Department of Mathematics and Systems Analysis, Aalto University, P.O.~Box~11100, FI-00076 Aalto, Finland.

\noindent Email: {\tt riikka.korte@aalto.fi}

\end{document}